\documentclass[12pt]{article}

\usepackage{amsmath,amsthm,amssymb}
\usepackage[hidelinks]{hyperref}

\newtheorem{theorem}{Theorem}
\newtheorem{proposition}[theorem]{Proposition}
\newtheorem{lemma}[theorem]{Lemma}

\newcommand{\hsum}[1]{\widehat{#1}}
\newcommand{\seqnum}[1]{\href{https://oeis.org/#1}{\rm \underline{#1}}}

\title{The greedy 3-sumfree sequence $S_{1,g,g+1}$}
\author{Orion Shtrezi}
\date{}

\begin{document}

\maketitle

\begin{abstract}
For every integer $g\ge 2$, we determine exactly which integers occur in the greedy 3-sumfree sequence that starts with $1$, $g$, and $g+1$. This gives a direct proof of a conjecture of Bosma, Bruin, Fokkink, Grube, Reuijl, and Tromp. We also give an explicit description of the eventual periodicity, including both the preperiod and the repeating block.
\end{abstract}

\section{Introduction}\label{sec:intro}

Let $x<y<z$ be positive integers. The greedy 3-sumfree sequence $S_{x,y,z}$ is the increasing sequence that begins with $x,y,z$ and then continues by the rule that each new term is the least positive integer exceeding the preceding term that is not a sum of three distinct earlier terms.

Bosma et al.\ \cite[Theorem~15 and Conjecture~16]{BosmaEtAl} proved explicit descriptions of $S_{1,2,3}$, $S_{1,3,4}$, and $S_{1,4,5}$ with the automatic theorem prover Walnut, and they conjectured a uniform formula for $S_{1,g,g+1}$ for every $g\ge 2$, which they verified mechanically for $2\le g\le 10$. The cases $S_{1,2,3}$ and $S_{1,3,4}$ appear in the OEIS \cite{OEIS} as \seqnum{A026471} and \seqnum{A026475}, respectively; a note in entry \seqnum{A026471} records an earlier proof of the case $g=2$ by Matthew Akeran. In this note we prove the full conjecture by a direct interval-sumset argument.

For integers $u$ and $v$, we write
\[
[u,v]=\{n\in\mathbb Z : u\le n\le v\},
\]
with the convention that $[u,v]=\varnothing$ when $u>v$.

\begin{theorem}\label{thm:main}
Let $g\ge 2$ be an integer. Then the greedy 3-sumfree sequence $S_{1,g,g+1}$ is characterized by
\begin{equation}\label{eq:main}
\begin{aligned}
 z\in S_{1,g,g+1}
 &\iff
 z\in\{1,2g+1,6g+1\} \\
 &\hspace{2.5em}\text{or } z\bmod(10g+3)\in[g,2g]\cup[6g+2,7g+1].
\end{aligned}
\end{equation}
In particular, after the first $g+4$ terms, the sequence modulo $10g+3$ is periodic with period length $2g+1$.
\end{theorem}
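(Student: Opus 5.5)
Let $P=10g+3$ and $A=[g,2g]\cup[6g+2,7g+1]$. Write $T$ for the set on the right of \eqref{eq:main}. The plan is a strong induction on $z$. We show that for every $z>g+1$, $z$ is a sum of three distinct smaller elements of $T$ if and only if $z\notin T$. Since the greedy rule only ever looks at earlier terms, this forces $S_{1,g,g+1}=T$.

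The first step is to list $T$ in increasing order. It starts with $1$, then $[g,2g]$ (the block $g,g+1$ followed by $g+2,\dots,2g$), then $2g+1$, then $6g+1$, then $[6g+2,7g+1]$, and after that the blocks $kP+[g,2g]$ and $kP+[6g+2,7g+1]$ for $k\ge1$. Counting gives $1+(g+1)+1+1+g=2g+4$ terms before period $1$ begins. Per period there are $2g+1$ new terms. This accounts for the periodicity statement, once we reconcile the "first $g+4$ terms" wording with the exact preperiod indexing. For the base range $z<P$, I verify directly that each of $g+2,\dots,2g$, $2g+1$, $6g+1$ and $6g+2,\dots,7g+1$ is not a 3-sum of distinct earlier elements. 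The 3-sums from $\{1\}\cup[g,2g]$ cover $1+[2g+1,4g-1]=[2g+2,4g]$ together with the distinct triples inside $[g,2g]$, namely $[3g+3,6g-3]$. Adding $2g+1$ covers up to $6g$. I also check that every gap element in $(2g+1,P)$ is hit.

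The second step is the periodic regime. Let $B=[g,2g]$ and $C=[6g+2,7g+1]$. A sum of three terms lies in one of the residue classes $B+B+B$, $B+B+C$, $B+C+C$, $C+C+C$, or in a sum involving the exceptional terms $1,2g+1,6g+1$, all taken mod $P$. As intervals: $3B=[3g,6g]$, $2B+C=[8g+2,11g+1]$, which is $[8g+2,10g+2]\cup[0,g-2]$ mod $P$, $B+2C=[13g+4,16g+2]$, which is $[3g+1,6g-1]$ mod $P$, and $3C=[18g+6,21g+3]$, which is $[8g,11g-3]$ mod $P$. Together with the shifts by $1$, $2g+1$ and $6g+1$, these residues should cover exactly the complement of $A$ and avoid $A$ itself. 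One checks directly that $[g,2g]$ and $[6g+2,7g+1]$ miss all of these.

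The last requirement is distinctness, together with making sure enough large summands exist below $z$. For covering, when $z\equiv r\notin A$ with $z\ge P$, I choose the top summand in the most recent block and the other two in earlier blocks so that the three are distinct and smaller than $z$. For avoidance, I argue purely by residues, using distinctness to rule out the boundary sums. For example, $3B$ with distinct elements gives only $[3g+3,6g-3]$, and this is what keeps $6g+1$ and $2g+1$-type residues free once the exceptional terms are included.

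The main obstacle is the bookkeeping of exceptional terms and edge residues. Residues such as $g-1$, $2g+1$, $6g+1$ and $7g+2$ must be shown to be attained exactly when required. Sums involving $1$, $2g+1$ or $6g+1$ must never land in $A$: for instance $1+B+C\equiv[7g+3,9g+2]$ and $(6g+1)+B+B=[8g+1,10g+1]$. I would first tabulate all such residue intervals mod $P$ and compare them against $A$. Then I would handle the smallest periods $k=1,2$ by hand, where not all blocks are yet available for the covering step.
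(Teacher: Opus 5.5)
Your outline follows the paper's route: a strong induction that reduces everything to two claims, that each omitted $z>g+1$ is a sum of three distinct members of $T$ and that no member of $T$ is such a sum. The avoidance half works as you plan it, by tabulating block sumsets modulo $P$ and disposing of $1$, $2g+1$, $6g+1$ by size. It needs two corrections, neither of which breaks anything.

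First, with your $B=[g,2g]$ and $C=[6g+2,7g+1]$, you have $3C=[18g+6,21g+3]\equiv[8g+3,11g]$, not $[8g,11g-3]$. The corrected interval still misses $A$.

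Second, you cannot use distinctness to shrink residue sets. Elements of $B$ taken from three different periods are automatically distinct and realize every residue in $[3g,6g]$. No shrinking is needed, since the full sumsets already miss $A$. Distinctness matters in only two places: no exceptional term may be repeated (e.g.\ $(2g+1)+(2g+1)+(kP+2g)=kP+6g+2\in T$), and the two values $2g+1=1+g+g$ and $6g+1=(2g+1)+2g+2g$ must be excluded, which you already do directly. Also remember the sums with two or three exceptional terms, such as $1+(6g+1)+C\equiv[2g+1,3g]$ and $1+(2g+1)+(6g+1)=8g+3$; these miss $A$ too.

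The genuine gap is the covering half for $z\ge P$, which you only assert. Your own list of delicate residues explains what is needed. Modulo $P$, the purely periodic sums $3B$, $2B+C$, $B+2C$, $3C$ reach only $[0,g-2]\cup[3g,6g]\cup[8g+2,10g+2]$. So the residues $g-1$, $[2g+1,3g-1]$, $6g+1$ and $[7g+2,8g+1]$ can only be reached using $1$, $2g+1$ or $6g+1$, and these occur only in period $0$.

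The paper therefore takes the two smaller summands from period $0$ and the top summand from $kP+B$, $kP+C$ or $(k-1)P+C$. This is uniform in $k\ge1$. Here $\hsum{2}X$ denotes sums of two distinct elements of $X$.
\begin{itemize}
\item $((k-1)P+7g+1)+(2g+1)+[g+2,2g]=kP+[1,g-1]$.
\item $1+[g,2g+1]+(kP+B)=kP+[2g+1,4g+2]$.
\item $(kP+2g)+\hsum{2}[g,2g+1]=kP+[4g+1,6g+1]$.
\item $1+[6g+1,7g+1]+(kP+B)=kP+[7g+2,9g+2]$.
\item $(kP+7g+1)+\hsum{2}[g,2g+1]=kP+[9g+2,11g+2]\supseteq kP+[9g+2,P]$.
\end{itemize}
The last two identities also hold for $k=0$, which settles $[7g+2,P]$ in your base range. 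With this choice, no separate treatment of $k=1,2$ is needed.

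Finally, the preperiod mismatch is only apparent. The residues of $[6g+2,7g+1]$ already form the first half of the repeating block $6g+2,\dots,7g+1,g,\dots,2g$. So periodicity begins right after the $g+4$ terms $1,g,\dots,2g+1,6g+1$, as the statement says.
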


Section~\ref{sec:prelim} fixes notation and reduces Theorem~\ref{thm:main} to two propositions, which are proved in Sections~\ref{sec:omitted} and~\ref{sec:included}; the induction that completes the proof is carried out at the end of Section~\ref{sec:included}.

\section{Notation and reduction}\label{sec:prelim}

For subsets $X,Y\subseteq\mathbb Z$, we write
\[
X+Y=\{x+y : x\in X,\ y\in Y\}.
\]
For a set $X\subseteq\mathbb Z$, we define
\[
\begin{aligned}
\hsum{2}X &= \{x+y : x,y\in X,\ x<y\}, \\
\hsum{3}X &= \{x+y+z : x,y,z\in X,\ x<y<z\}.
\end{aligned}
\]
We use the elementary identities
\begin{equation}\label{eq:hat-identities}
\hsum{2}[u,v]=[2u+1,2v-1],
\qquad
\hsum{3}[u,v]=[3u+3,3v-3],
\end{equation}
whenever the intervals on the left are nonempty.

Set
\[
 m=10g+3,
 \qquad
 e=2g+1,
 \qquad
 f=6g+1,
\]
and define
\[
 I_k=km+[g,2g],
 \qquad
 J_k=km+[6g+2,7g+1]
 \qquad
 (k\ge 0).
\]
Then the right-hand side of \eqref{eq:main} is the set
\[
 A_g=\{1,e,f\}\cup\bigcup_{k\ge 0}(I_k\cup J_k).
\]
Since $g\ge 2$, the residues of $1$, $e$, and $f$ modulo $m$ lie outside $[g,2g]\cup[6g+2,7g+1]$, so the sets $\{1\}$, $\{e\}$, $\{f\}$, $I_0,I_1,\ldots$, and $J_0,J_1,\ldots$ are pairwise disjoint and partition $A_g$.

We call an integer \emph{forced} if it is a sum of three distinct elements of $A_g$. Since all elements of $A_g$ are positive, any representation
\[
 n=a+b+c
\]
with distinct $a,b,c\in A_g$ automatically satisfies $a,b,c<n$. Thus Theorem~\ref{thm:main} follows once we prove the following two propositions.

\begin{proposition}\label{prop:omitted}
Every integer $n>g+1$ with $n\notin A_g$ is forced.
\end{proposition}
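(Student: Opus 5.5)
The plan is a direct covering argument: write the complement of $A_g$ above $g+1$ as a union of explicit intervals, and exhibit for each one a family of three-element sums from $A_g$ whose union covers it. Modulo $m=10g+3$ the residues missing from $A_g$ are $[2g+1,6g+1]$ and $[7g+2,10g+2]\cup[0,g-1]$, the latter read cyclically. So the integers to be forced are the \emph{low gaps} $G_k=km+[2g+1,6g+1]$ for $k\ge0$, with $e,f$ removed when $k=0$. They also include the \emph{high gaps} $H_k=km+[7g+2,11g+2]$ for $k\ge0$, and the initial range $[g+2,g-1]$, which is empty. The basic tools are the identities \eqref{eq:hat-identities}, used when two or three summands come from the same block $I_a$ or $J_a$, together with ordinary interval sumsets when the summands lie in different blocks. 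Distinctness is then automatic, because the blocks are pairwise disjoint.

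For the low gaps with $k\ge1$, sums of the form $1+I_a+I_b$ and $I_a+I_b+I_c$ with $a+b+c=k$ are natural candidates. Taking $a\ne b$ in the first gives $km+[2g+1,4g+1]$. In the second, taking the indices distinct or using $\hsum{3}I_a$ gives at least $km+[3g+3,6g-3]$, and taking $a\ne b=c$ extends this toward $km+[3g+1,6g-1]$. The leftover residues near $6g$ and $6g+1$ I would get from sums involving $e$ or $f$, for instance $f+1+(\text{something congruent to }-1)$, or from $1+e+I_a+\dots$ adjusted by a $J$ block. For $k=0$ the same sums, but with hats and the elements $1,e$ from $A_g$, cover $[2g+2,6g]$ minus $e$. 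Here $1+\hsum{2}I_0=[2g+2,4g]$, $1+e+I_0=[3g+2,4g+2]$ and $\hsum{3}I_0=[3g+3,6g-3]$, and the few residual values are checked by hand.

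For the high gaps the natural families are the following, each with indices summing to $k$, or to $k-1$ when a $J$ block absorbs a wrap-around:
\begin{itemize}
\item $1+I_a+J_b$, giving $km+[7g+3,9g+2]$;
\item $e+I_a+J_b$, giving $km+[9g+3,10g+2]$;
\item $I_a+I_b+J_c$, giving $km+[8g+2,11g+1]$.
\end{itemize}
The endpoints $km+7g+2$ and $km+11g+2=(k+1)m+g-1$ need extra combinations, such as $1+e+J_a$ (wait: $1+e+J$ starts at $8g+4$). So for $7g+2$ I would try $f+1+\text{(an element}\equiv g+1)$, namely $f+1+I_a$ at its bottom; this gives $7g+2+g$, so the actual fix is $1+f+\dots$ with $J$ blocks across a wrap. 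I expect the careful choice to be of the form $f+J_a+J_b$ or $J_a+J_b+J_c$ reduced modulo $m$ (for example $2(6g+2)+\dots$ wraps to about $2g+1$ plus the next block). I would tabulate all sums of three blocks among $\{1\},\{e\},\{f\},I,J$ modulo $m$ and read off which cover each endpoint residue.

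The main obstacle is this bookkeeping at the boundaries. The first difficulty is the small $k$ (namely $k=0,1$), where the needed index combinations, such as three distinct blocks or two $J$ blocks, may not yet exist; hat versions must replace them, and these lose one or three at each end. The second is the extreme residues $6g+1$, $7g+2$, $g-1$, where only one or two specific triples work. I would first settle the generic case $k\ge2$ using only different-block sums. Then I would treat $k\le1$ explicitly, also using the sporadic elements $1,e,f$ and the hat identities, and verifying any remaining isolated values directly, with $g\ge2$ ensuring that all intervals used are nonempty.
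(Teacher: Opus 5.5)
Your strategy of covering the complement of $A_g$ by explicit sumsets of the parts $\{1\},\{e\},\{f\},I_k,J_k$ is the paper's, and most of your interval computations are correct. However, the proposal does not close the two places where the argument is delicate, and the concrete fixes you offer there cannot work.

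At the high-gap endpoint $km+7g+2$ you discarded $1+f+I_a$ because of an arithmetic slip. In fact $1+f+(km+g)=1+(6g+1)+km+g=km+7g+2$ exactly, so $1+f+I_k=km+[7g+2,8g+2]$. Your proposed replacements miss this residue entirely. Modulo $m$, $f+J_a+J_b$ only produces residues in $[8g+2,10g]$, and $J_a+J_b+J_c$ only produces residues in $[8g+3,10g+2]\cup[0,g-3]$. The case table in the proof of Lemma~\ref{lem:sums-in-C} shows that $1+f+I_p$ is the \emph{only} type of triple whose sum can be $\equiv 7g+2\pmod m$. So your fallback tabulation would lead straight back to the family you rejected.

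The same problem occurs at the top of the low gaps for $k\ge1$. No element of $A_g$ is congruent to $-1$ or $-2$ modulo $m$, so $f+1+(\cdot)$ cannot give $km+6g$ or $km+6g+1$. Your other suggestion, ``$1+e+I_a+\dots$ adjusted by a $J$ block'', has four summands. What is needed is $e+I_0+I_k=km+[4g+1,6g+1]$, which is the only triple type that reaches residue $6g+1$.

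Once these two families are in place, the small-$k$ case analysis you anticipated disappears:
\begin{itemize}
\item For $k\ge1$, $1+I_0+I_k$ and $e+I_0+I_k$ cover $km+[2g+1,6g+1]$.
\item For every $k\ge0$, $1+f+I_k$, $1+I_0+J_k$ and $e+I_0+J_k$ cover $km+[7g+2,11g+2]$. Note that $e+I_a+J_b=km+[9g+3,11g+2]$, not $km+[9g+3,10g+2]$, so the wrap-around endpoint $(k+1)m+g-1$ needs nothing extra.
\item The block-$0$ low gap $[2g+2,6g]$ is covered by your $1+\hsum{2}I_0$ and $1+e+I_0$, together with $e+\hsum{2}I_0=[4g+2,6g]$. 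This last set also handles your residual values $6g-2$, $6g-1$ and $6g$.
\end{itemize}
This is exactly the paper's covering, written in terms of $B=I_0\cup\{e\}$ and $D=\{f\}\cup J_0$. There, $1+B+D$ and $1+D+I_k$ contain the crucial triples $1+f+I_k$, and $(km+2g)+\hsum{2}B$ supplies the $e+I_0+I_k$ triples.
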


\begin{proposition}\label{prop:included}
No element of $A_g$ is forced.
\end{proposition}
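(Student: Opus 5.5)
The plan is to treat the targets $z\in\bigcup_k(I_k\cup J_k)$ by a residue computation modulo $m=10g+3$, and the three exceptional targets $1,e,f$ by a size argument. Throughout we only need \emph{upper} information on the set of possible sums, so we may replace hat-sums by ordinary sumsets of intervals. This gives supersets of the residues that actually occur, and the identities \eqref{eq:hat-identities} are not even needed.

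\textbf{Residue sets.} Write $R_I=[g,2g]$ and $R_J=[6g+2,7g+1]$ for the residue sets of the blocks. The target residue set is $T=R_I\cup R_J$, and the exceptional residues are $1$, $e=2g+1$ and $f=6g+1$, none of which lies in $T$. A sum $a+b+c$ of three distinct elements of $A_g$ has one of the following types, according to how many summands are exceptional.

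\emph{No exceptional summand.} The residue lies in one of
\[
R_I+R_I+R_I,\quad R_I+R_I+R_J,\quad R_I+R_J+R_J,\quad R_J+R_J+R_J \pmod m .
\]

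\emph{One exceptional summand $x\in\{1,e,f\}$.} The residue lies in $x$ plus one of these two-fold sumsets.

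\emph{Two exceptional summands.} The residue lies in $1+e$, $1+f$ or $e+f$, plus $R_I$ or $R_J$.

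\emph{Three exceptional summands.} The sum is exactly $1+e+f=8g+3$.

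\textbf{Verifying each case.} Each of these sumsets is an integer interval of length at most $3g$, so reducing it modulo $m$ gives at most two intervals. For each one we check that it misses $T$. For example:
\begin{itemize}
\item $3R_I=[3g,6g]$;
\item $2R_I+R_J=[8g+2,11g+1]$, which reduces to $[8g+2,10g+2]\cup[0,g-2]$;
\item $R_I+2R_J=[13g+4,16g+2]$, which reduces to $[3g+1,6g-1]$;
\item $3R_J$ reduces to $[8g+3,10g+2]\cup[0,g-3]$.
\end{itemize}
The one-exceptional and two-exceptional cases are handled the same way. For instance, $f+2R_J$ reduces to $[8g+2,10g]$, and $1+f+R_J$ reduces to $[2g+1,3g]$.

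The main risk is an off-by-one at the boundaries, since several of these intervals end exactly one step from $T$. Examples are $e+2R_I=[4g+1,6g+1]$ against $6g+2$, and $1+f+R_J$ starting at $2g+1$ against $2g$. I will therefore tabulate all the cases explicitly, with both endpoints of each interval. The triple sum $8g+3$ satisfies $0<8g+3<m$ and has residue outside $T$.

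\textbf{Consequence for block elements.} Every sum of three distinct elements of $A_g$ has residue outside $T$. Hence no element of any $I_k$ or $J_k$ is forced.

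\textbf{The exceptional targets.} It remains to show that $1$, $e$ and $f$ are not forced, and here we use size.
\begin{itemize}
\item Three distinct elements of $A_g$ sum to at least $1+g+(g+1)=2g+2$. This exceeds both $1$ and $e=2g+1$.
\item For $f=6g+1$, the elements of $A_g$ below $f$ are $1$, $[g,2g]$ and $e$. The largest sum of three distinct ones is $(2g+1)+2g+(2g-1)=6g<f$.
\end{itemize}
This completes the plan for Proposition~\ref{prop:included}.
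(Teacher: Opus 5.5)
Your proposal is correct and takes essentially the same route as the paper. The paper's set $C$ is exactly the set of nonnegative integers whose residue modulo $m$ lies outside $[g,2g]\cup[6g+2,7g+1]$, so its Lemma~\ref{lem:sums-in-C} is your residue case analysis, also ignoring distinctness within blocks, and your size arguments for $1$, $e$, $f$ are identical to the paper's; the intervals you computed agree with the paper's table, and the cases you left untabulated check out the same way.
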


\section{Representing the omitted integers}\label{sec:omitted}

\begin{proof}[Proof of Proposition~\ref{prop:omitted}]
For the initial block, let
\[
B=[g,2g+1],
\qquad
D=[6g+1,7g+1],
\]
so that $B=I_0\cup\{e\}$ and $D=\{f\}\cup J_0$ are subsets of $A_g$.
The omitted integers above $g+1$ in block $0$ are
\[
[2g+2,6g]\cup[7g+2,m].
\]
By \eqref{eq:hat-identities} we have
\[
1+\hsum{2}B=[2g+2,4g+2],
\qquad
\hsum{3}B=[3g+3,6g],
\]
so these two intervals cover $[2g+2,6g]$. Likewise,
\[
1+B+D=[7g+2,9g+3],
\qquad
(7g+1)+\hsum{2}B=[9g+2,11g+2].
\]
Since $m=10g+3\le 11g+2$, these two intervals cover $[7g+2,m]$.

Now let $k\ge 1$. The omitted integers in block $k$ are exactly
\[
km+[1,g-1],
\qquad
km+[2g+1,6g+1],
\qquad
km+[7g+2,m].
\]
For the first gap we have
\[
((k-1)m+7g+1)+e+[g+2,2g]=km+[1,g-1].
\]
For the middle gap we have
\[
1+B+I_k=km+[2g+1,4g+2],
\qquad
(km+2g)+\hsum{2}B=km+[4g+1,6g+1].
\]
For the last gap we have
\[
1+D+I_k=km+[7g+2,9g+2],
\qquad
(km+7g+1)+\hsum{2}B=km+[9g+2,11g+2].
\]
Again $m=10g+3\le 11g+2$, so these intervals cover all of $km+[7g+2,m]$.

Each displayed identity gives a representation by three distinct elements of $A_g$. The operators $\hsum{2}$ and $\hsum{3}$ enforce distinctness inside $B$, and the remaining singled-out summand lies outside the relevant copy of $B$. In the mixed sums $1+B+D$, $1+B+I_k$, and $1+D+I_k$, the three summands come from pairwise disjoint sets, and the same holds for the representation of the first gap, whose summands come from $J_{k-1}$, $\{e\}$, and $[g+2,2g]\subseteq I_0$. Therefore every omitted integer $n>g+1$ is forced.
\end{proof}

\section{Excluding the admitted integers}\label{sec:included}

Define the periodic complement-pattern set
\[
C=
\bigcup_{k\ge 0}\bigl(km+[0,g-1]\bigr)
\cup
\bigcup_{k\ge 0}\bigl(km+[2g+1,6g+1]\bigr)
\cup
\bigcup_{k\ge 0}\bigl(km+[7g+2,m]\bigr).
\]
By construction,
\begin{equation}\label{eq:A-cap-C}
A_g\cap C=\{1,e,f\}.
\end{equation}

\begin{lemma}\label{lem:sums-in-C}
Every sum of three distinct elements of $A_g$ lies in $C$.
\end{lemma}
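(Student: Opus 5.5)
The plan is to work entirely modulo $m$. By the definition of $C$, an integer $n\ge 0$ lies in $C$ exactly when its residue $n \bmod m$ lies outside
\[
R=[g,2g]\cup[6g+2,7g+1].
\]
This holds because $km+[7g+2,m]$ contributes the residues $[7g+2,m-1]\cup\{0\}$, and together with $[0,g-1]$ and $[2g+1,6g+1]$ these are exactly the residues not in $R$. Every element of $A_g$ has one of five \emph{types}, determined by its residue:
\begin{itemize}
\item the three singletons $1$, $e=2g+1$, $f=6g+1$;
\item type $I$, with residue in $[g,2g]$;
\item type $J$, with residue in $[6g+2,7g+1]$.
\end{itemize}
A sum $a+b+c$ of three distinct elements of $A_g$ therefore has residue congruent to the sum of the three residues. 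It suffices to check, for every admissible multiset of types, that the integer interval of possible residue sums, reduced modulo $m$, avoids $R$. The only role of distinctness is that each of $1,e,f$ can occur at most once. Repeated $I$ or $J$ types are allowed, and I will not need distinctness within those classes at all, since the interval bounds already suffice.

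I would organise the case analysis by the number of special summands $0,1,2,3$, computing each range as a sum of intervals and subtracting the appropriate multiple of $m=10g+3$.

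\emph{No specials.}
\begin{itemize}
\item $III$ gives $[3g,6g]$.
\item $IIJ$ gives $[8g+2,11g+1]$, which reduces to $[8g+2,10g+2]\cup[0,g-2]$.
\item $IJJ$ gives $[13g+4,16g+2]-m=[3g+1,6g-1]$.
\item $JJJ$ gives $[18g+6,21g+3]-2m$, that is, residues $[8g+3,m-1]\cup[0,g-3]$.
\end{itemize}

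\emph{One special.}
\begin{itemize}
\item With $1$: $1+II=[2g+1,4g+1]$, $1+IJ=[7g+3,9g+2]$, and $1+JJ\equiv[2g+2,4g]$.
\item With $e$: $e+II=[4g+1,6g+1]$, $e+IJ\equiv[9g+3,10g+2]\cup[0,g-1]$, and $e+JJ\equiv[4g+2,6g]$.
\item With $f$: $f+II=[8g+1,10g+1]$, $f+IJ\equiv[3g,5g-1]$, and $f+JJ\equiv[8g+2,10g]$.
\end{itemize}

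\emph{Two specials.} The pair sums are $1+e=2g+2$, $1+f=6g+2$, and $e+f=8g+2$. Adding an $I$ or a $J$ gives:
\begin{itemize}
\item $1+e$: $[3g+2,4g+2]$ with $I$, and $[8g+4,9g+3]$ with $J$;
\item $1+f$: $[7g+2,8g+2]$ with $I$, and $[2g+1,3g]$ with $J$ (mod $m$);
\item $e+f$: $[9g+2,10g+2]$ with $I$, and $[4g+1,5g]$ with $J$ (mod $m$).
\end{itemize}

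\emph{Three specials.} The only case is $1+e+f=8g+3$.

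Each of these residue sets lies in the complement of $R$, which proves the lemma.

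The main obstacle is purely bookkeeping: making sure no type combination is missed, and that the endpoints landing exactly at the boundaries of $R$ are handled correctly. The tightest endpoints are $e+IJ$, whose top value $11g+2$ reduces to $g-1$, and $f+II$ and $1+JJ$, which come near $R$ at $2g+1$ and $6g+1$. I would present the computation as a single table of the fifteen or so cases, listing the raw interval, the multiple of $m$ subtracted, and the reduced interval, so that each comparison with $R$ is visibly immediate. Once this table is in place, Lemma~\ref{lem:sums-in-C} combined with \eqref{eq:A-cap-C} yields Proposition~\ref{prop:included} directly, except for ruling out that $1$, $e$ or $f$ are forced. Those three are handled by size or by hand, since they are at most $6g+1$ and have very few representations.
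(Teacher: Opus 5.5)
Your proposal is correct and follows the paper's proof essentially step for step. It uses the same twenty type patterns, grouped by the number of exceptional summands, the same interval sumsets, and the same observation that distinctness matters only for $1,e,f$; the only difference is cosmetic, in that you test membership in $C$ via residues modulo $m$ (valid since all sums are positive), whereas the paper tracks the block offsets $km$ explicitly.
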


\begin{proof}
We examine each feasible block-membership pattern for three distinct elements of $A_g$. By the partition of $A_g$ noted in Section~\ref{sec:prelim}, the patterns below are exhaustive, and summands taken from different parts of the partition are automatically distinct. Inside the interval blocks $I_k$ and $J_k$ we ignore distinctness, since doing so can only enlarge the corresponding sumsets.

For the three exceptional terms we have
\[
1+e+f=8g+3\in C.
\]

For two exceptional terms and one periodic term, we obtain
\begin{align*}
1+e+I_p &= pm+[3g+2,4g+2],
& 1+e+J_p &= pm+[8g+4,9g+3], \\
1+f+I_p &= pm+[7g+2,8g+2],
& 1+f+J_p &= (p+1)m+[2g+1,3g], \\
e+f+I_p &= pm+[9g+2,10g+2],
& e+f+J_p &= (p+1)m+[4g+1,5g].
\end{align*}

For one exceptional term and two periodic terms, we obtain
\begin{align*}
1+I_p+I_q &= (p+q)m+[2g+1,4g+1], \\
1+I_p+J_q &= (p+q)m+[7g+3,9g+2], \\
1+J_p+J_q &= (p+q+1)m+[2g+2,4g], \\
e+I_p+I_q &= (p+q)m+[4g+1,6g+1], \\
e+I_p+J_q &= (p+q)m+[9g+3,m]\cup(p+q+1)m+[1,g-1], \\
e+J_p+J_q &= (p+q+1)m+[4g+2,6g], \\
f+I_p+I_q &= (p+q)m+[8g+1,10g+1], \\
f+I_p+J_q &= (p+q+1)m+[3g,5g-1], \\
f+J_p+J_q &= (p+q+1)m+[8g+2,10g].
\end{align*}

Finally, for three periodic terms, let $K=p+q+r$. Then
\begin{align*}
I_p+I_q+I_r &= Km+[3g,6g], \\
I_p+I_q+J_r &= Km+[8g+2,m]\cup(K+1)m+[1,g-2], \\
I_p+J_q+J_r &= (K+1)m+[3g+1,6g-1], \\
J_p+J_q+J_r &= (K+1)m+[8g+3,\min\{11g,m\}]\cup(K+2)m+[1,g-3].
\end{align*}
Each right-hand side is contained in $C$. Therefore every sum of three distinct elements of $A_g$ lies in $C$.
\end{proof}

\begin{proof}[Proof of Proposition~\ref{prop:included}]
By Lemma~\ref{lem:sums-in-C} and \eqref{eq:A-cap-C}, the only elements of $A_g$ that could be forced are $1$, $e$, and $f$.

The smallest sum of three distinct elements of $A_g$ is
\[
1+g+(g+1)=2g+2,
\]
so neither $1$ nor $e=2g+1$ can be forced.

It remains to exclude $f=6g+1$. The elements of $A_g$ below $f$ are exactly
\[
1,\ g,\ g+1,\ \ldots,\ 2g,\ e.
\]
The largest sum of three distinct such elements is
\[
e+2g+(2g-1)=(2g+1)+2g+(2g-1)=6g<f.
\]
Therefore $f$ is not forced either.
\end{proof}

\begin{proof}[Proof of Theorem~\ref{thm:main}]
We prove by induction on $n$ that
\[
n\in S_{1,g,g+1}
\iff
n\in A_g.
\]
The claim is clear for $1\le n\le g+1$, since both sets contain exactly the integers $1$, $g$, and $g+1$ in this range.

Now let $n>g+1$, and assume that the claim holds for every positive integer smaller than $n$. If $n\notin A_g$, then Proposition~\ref{prop:omitted} gives a representation
\[
n=a+b+c
\]
with distinct $a,b,c\in A_g$. Since $a,b,c<n$, the induction hypothesis gives $a,b,c\in S_{1,g,g+1}$. Hence the greedy rule excludes $n$.

If $n\in A_g$, then Proposition~\ref{prop:included} shows that $n$ is not a sum of three distinct elements of $A_g$. By the induction hypothesis, the earlier terms of $S_{1,g,g+1}$ are exactly the elements of $A_g$ below $n$. Therefore $n$ is not a sum of three distinct earlier terms of $S_{1,g,g+1}$, so the greedy rule admits $n$.

This completes the induction, and therefore $S_{1,g,g+1}=A_g$. The description \eqref{eq:main} follows immediately.

After the first $g+4$ terms
\[
1,\ g,\ g+1,\ \ldots,\ 2g,\ 2g+1,\ 6g+1,
\]
the sequence of residues repeats as
\[
6g+2,\ 6g+3,\ \ldots,\ 7g+1,\ g,\ g+1,\ \ldots,\ 2g
\]
modulo $m=10g+3$. This repeating block has length $2g+1$.
\end{proof}

\bigskip

\noindent
2020 \emph{Mathematics Subject Classification}: Primary 11B83; Secondary 11B75.

\smallskip

\noindent
\emph{Keywords:} greedy sequence, sumfree sequence, integer sequence, eventual periodicity.

\bigskip

\noindent
(Concerned with sequences \seqnum{A026471} and \seqnum{A026475}.)


\begin{thebibliography}{9}

\bibitem{BosmaEtAl}
W.~Bosma, R.~Bruin, R.~Fokkink, J.~Grube, A.~Reuijl, and T.~Tromp,
Using Walnut to solve problems from the OEIS,
\emph{J. Integer Seq.} \textbf{28} (2025), Article 25.3.8.

\bibitem{OEIS}
N.~J.~A. Sloane et al., \emph{The On-Line Encyclopedia of Integer Sequences}, 2026. Available at \url{https://oeis.org}.

\end{thebibliography}
\end{document}